\documentclass[12pt,reqno]{amsart}
\usepackage[T1]{fontenc}
\usepackage{lmodern}
\usepackage{microtype}
\usepackage{mathtools}
\usepackage{amssymb}
\usepackage{enumitem}
\usepackage{needspace}
\usepackage[hidelinks]{hyperref}
\numberwithin{equation}{section}
\setlist[enumerate]{label=\textup{(\roman*)},leftmargin=2.2em}
\newcommand{\Sym}{\operatorname{Sym}}
\newcommand{\GL}{\operatorname{GL}}
\newcommand{\Tor}{\operatorname{Tor}}
\newcommand{\Sing}{\operatorname{Sing}}
\newtheorem{theorem}{Theorem}[section]
\newtheorem{corollary}[theorem]{Corollary}
\title[The Kemper--Malle conjecture]
{The Kemper--Malle conjecture and\\
isolated linear quotient singularities}
\author{Yang Chen}
\address{Y. Chen: School of Science, Jimei University, Xiamen, Fujian, 361021, P. R. China}
\email{chenyang1729@hotmail.com}
\date{}
\subjclass[2020]{Primary 13A50; Secondary 14B05, 14L30}
\keywords{modular invariant theory, Kemper--Malle conjecture, pseudoreflection,
isolated quotient singularity}
\hypersetup{pdftitle={The Kemper--Malle conjecture and isolated linear quotient singularities},pdfauthor={Yang Chen}}
\begin{document}
\raggedbottom
\begin{abstract}
We prove the Kemper--Malle conjecture, which provides a modular analogue of the Chevalley--Shephard--Todd theorem. As a consequence, the classification of isolated linear quotient singularities over an algebraically closed field of positive characteristic reduces to the classical Zassenhaus--Vincent--Wolf classification.
\end{abstract}
\maketitle

\section{Introduction}
The Chevalley--Shephard--Todd theorem originated in characteristic zero \cite{Che55,ST54}. Serre established a local version and gave a homological proof of sufficiency \cite[Theorem~$1'$ and its proof]{Serre68}. In the nonmodular case, where the group order is invertible in the ground field, the invariant ring is polynomial if and only if the group is generated by pseudoreflections \cite[Theorem~1]{Serre68}. In the modular case, polynomiality of the invariant ring implies that the group is generated by pseudoreflections, but the converse fails in general \cite[Theorem~2 and the subsequent example]{Serre68}. Kac and Watanabe gave an \'etale-topological proof of the necessary pseudoreflection condition \cite[Remark~3]{KW82}. In the same remark, Kac conjectured that, over an algebraically closed field, the invariant ring is polynomial if every point stabilizer is generated by pseudoreflections. The conjecture is false in general \cite[Introduction]{CN17}. For groups containing no transvections, Kemper and Malle proved the conjecture for irreducible representations \cite[Theorem~7.4]{KM97}, and Chen and Nan removed the irreducibility assumption \cite[Theorem~1.1]{CN17}. Serre \cite{Serre68} also proved that if a finite linear group has a polynomial invariant ring, then so does the pointwise stabilizer of every nonzero subspace (see also \cite[Theorem~B(a)]{Kemper02}). Nakajima used this result to show that several families of modular reflection groups have nonpolynomial invariant rings \cite[Theorems~5.2 and~5.4]{Nak79}. Kemper and Malle proved that, for an irreducible representation, the invariant ring is polynomial if the group is generated by pseudoreflections and the pointwise stabilizer of every nonzero subspace has a polynomial invariant ring \cite[Main Theorem]{KM97}. The Kemper--Malle conjecture asserts that the same criterion holds without the irreducibility assumption (see \cite[Theorem~1.3]{Step16} and \cite[Conjecture~C]{Braun24}). The three-dimensional case was settled by Stepanov \cite{Step16} and Shchigolev--Stepanov \cite[Corollary~1.5]{SS18}. Braun proved the conjecture for arbitrary finite subgroups of $\mathrm{SL}(V)$ \cite[Theorem~A]{Braun24}. We prove this conjecture by a uniform argument for irreducible and reducible representations, without using the classification of finite reflection groups.

Let $V$ be a finite-dimensional vector space over a field $k$ and $G$ a finite linear group on $V$. Write $k[V]=\Sym_k(V^*)$. For a subspace $U\subseteq V$, write
\[
G_U=\{g\in G:gu=u,\ \forall u\in U\}.
\]
For $x\in V$, write $G_x=\{g\in G:gx=x\}$. A pseudoreflection is an element $s\in G$ for which $s-1$ has rank one.

\begin{theorem}\label{thm:main}
Suppose that $\operatorname{char}k=p>0$ and $V\ne0$. Then $k[V]^G$ is polynomial if and only if $G$ is generated by pseudoreflections and $k[V]^{G_U}$ is polynomial for every nonzero $k$-subspace $U\subseteq V$.
\end{theorem}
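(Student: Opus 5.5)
The \emph{necessity} of both conditions is classical, so the real work is sufficiency. For necessity I would cite Serre: if $k[V]^G$ is polynomial then $G$ is generated by pseudoreflections \cite[Theorem~2]{Serre68}, and $k[V]^{G_U}$ is polynomial for every nonzero $U\subseteq V$ (\cite{Serre68}, see also \cite[Theorem~B(a)]{Kemper02}).

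For sufficiency, I would first reduce to $k$ algebraically closed. Polynomiality of a graded connected invariant ring is equivalent to regularity, and both that and generation by pseudoreflections are stable under extending $k$ to $\bar k$. The hypothesis on all $G_U$ also passes to $\bar k$: for a $\bar k$-subspace $W$, the group $G_W$ equals $G_U$ for the $k$-span $U$ of a suitable Galois-stable set. Alternatively one can note that only the subspaces $U=kx$ with $x$ a closed point are needed. Write $A=k[V]^G$, $\mathfrak m=A_+$ and $n=\dim V$; the case $n=1$ is trivial, so assume $n\ge2$.

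\emph{Step 1 (isolated singularity).} Let $0\ne x\in V$ and $y=\pi(x)$, where $\pi\colon V\to V/G$. The standard slice/completion isomorphism for finite groups gives $\widehat{A}_{y}\cong \widehat{k[V]^{G_x}}_{x}$. Since $G_x=G_{kx}$ and $k[V]^{G_{kx}}$ is polynomial by hypothesis, $A$ is regular at every closed point other than the origin. The regular locus is open, so $\Sing(\operatorname{Spec}A)\subseteq\{\mathfrak m\}$. Equivalently, on the punctured spectrum $X^\circ=\operatorname{Spec}A\smallsetminus\{\mathfrak m\}$ the finite module $k[V]$ is locally free. Indeed it is Cohen--Macaulay of full dimension over a regular local ring, so Auslander--Buchsbaum applies. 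Thus $\pi$ restricts to a finite flat map $V\smallsetminus\{0\}\to X^\circ$.

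\emph{Step 2 (the origin; main obstacle).} It remains to show that $k[V]$ is flat, hence free, over $A$ at $\mathfrak m$. That gives regularity of $A$, since a faithfully flat extension of a regular ring descends regularity. This step is exactly where the pseudoreflection hypothesis must enter, and I expect it to be the main difficulty.

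My first attempt would be homological, in the spirit of Serre's proof. Consider the graded $A$-module $M=k[V]$ and the graded vector spaces $\Tor_i^A(M,k)$. By Step 1 these are supported at $\mathfrak m$, hence finite-dimensional. I would try to show $\Tor_1^A(M,k)=0$ by an induction on generators: compare $A$ with $k[V]^{H}$ for the subgroup $H$ generated by pseudoreflections fixing a chosen hyperplane, or with the invariants of one pseudoreflection $s$.

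For a single pseudoreflection $s$ with fixed hyperplane $W$, we have $G_W\supseteq\langle s\rangle$. The extension $k[V]^{G_W}\subseteq k[V]$ is free, since $G_W$ is a subspace stabilizer and its invariant ring is polynomial by hypothesis. One then computes the Dedekind different of $k[V]/A$ as the product of the contributions of the reflecting hyperplanes. This uses the local polynomiality at generic points of the hyperplanes, which again follows from Step 1.

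The aim is a Hilbert-series identity of the form
\[
H_{k[V]}(t)=H_A(t)\cdot P(t),\qquad P(1)=|G|,
\]
with $P$ a polynomial. Together with the finiteness of the $\Tor$'s, this identity should force $k[V]$ to be free. The hypothesis that $G$ is generated by pseudoreflections is what makes the ramification divisor account for the full degree $|G|$: it rules out an unramified-in-codimension-one part, which is the obstruction to purity of freeness at the origin.

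If this degree bookkeeping fails, my fallback is an étale-fundamental-group argument in the style of Kac--Watanabe. Since $V\smallsetminus\{0\}$ is simply connected for $n\ge2$, generation by inertia (pseudoreflections) identifies $X^\circ$ with the regular punctured neighborhood. The aim would then be to show that the graded isolated singularity $A$ is actually smooth.
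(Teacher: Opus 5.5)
You handle necessity correctly. The reduction to $\bar k$ works (indeed $G_W=G_U$ for $U=V^{G_W}$), and Step~1 is fine. But Step~2, which carries the whole of the sufficiency direction, is not proved: you list strategies, and none of them works as stated.

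\begin{enumerate}
\item A Hilbert-series identity $H_{k[V]}=H_A\cdot P$ with some polynomial $P$, $P(1)=|G|$, plus local freeness on the punctured spectrum, gives no control on the minimal number of generators of $k[V]$ over $A$. Freeness amounts to $P$ being the Hilbert series of $k[V]/A_+k[V]$, and nothing in your sketch produces that.
\item The different is codimension-one data. For every finite $G$ it is a product of powers of linear forms over the reflecting hyperplanes, including for Nakajima's reflection groups with nonpolynomial invariants. So it cannot by itself decide freeness at the origin.
\item The fallback rests on a false premise. In characteristic $p$, $V\smallsetminus\{0\}$ is not \'etale simply connected (Artin--Schreier covers of $\mathbb A^n$ survive). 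Moreover, the Kac--Watanabe purity argument needs the base to be regular, which is what you are trying to prove.
\item Comparing $A$ with $k[V]^{G_W}$ for a hyperplane $W$ cannot work. The index $[G:G_W]$ is typically divisible by $p$, so there is no way to average back down to $A$.
\end{enumerate}

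The missing idea is to choose the comparison subgroup by its index, not by geometry. Take a Sylow $p$-subgroup $P$ and $0\ne x\in V^P$, which exists because $P$ is a $p$-group. Set $H=G_x=G_{kx}$. Then $k[V]^H$ is polynomial by hypothesis and $[G:H]\in k^\times$, and you can run Serre's argument relatively.

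Put $M=\Tor_1^A(k,k[V])$ and $L=\Tor_1^A(k,k[V]^H)$. Freeness of $k[V]$ over $k[V]^H$ gives $M\cong k[V]\otimes_{k[V]^H}L$, so $j_*(L)$ generates $M$. For a pseudoreflection $s$ one has $s-1=\alpha_s\Delta_s$ with $\Delta_s$ $A$-linear. Hence $(s-1)M\subseteq k[V]_+M$, and all of $G$ acts trivially on $M/k[V]_+M$.

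Now take the relative transfer $\rho=[G:H]^{-1}\sum_{gH}g\colon k[V]^H\to A$ and compose it with $A\subseteq k[V]$. This composite induces zero on $\Tor_1$, because it factors through $\Tor_1^A(k,A)=0$. Reducing modulo $k[V]_+M$ gives $j_*(v)\in k[V]_+M$ for all $v\in L$. Thus $M=k[V]_+M$, and graded Nakayama gives $M=0$. So $k[V]$ is free over $A$, and $A$ is regular. This uses a single point stabilizer and needs neither your Step~1 nor $k=\bar k$.
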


This proves the Kemper--Malle conjecture over arbitrary fields of positive characteristic and gives a modular analogue of the Chevalley--Shephard--Todd theorem. In the nonmodular case, the stabilizer condition is automatic for groups generated by pseudoreflections, so the criterion reduces to the classical theorem. We prove Theorem~\ref{thm:main} using the following relative form of the sufficient direction of the classical theorem.

\begin{theorem}\label{thm:relative}
Let $k$ be a field of arbitrary characteristic and $G$ a finite linear group on $V$ generated by pseudoreflections. If a subgroup $H\leq G$ satisfies $[G:H]\in k^\times$ and $k[V]^H$ is polynomial, then $k[V]^G$ is polynomial.
\end{theorem}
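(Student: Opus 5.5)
Since $[G:H]\in k^\times$, the relative transfer
\[
\operatorname{Tr}^G_H=\frac{1}{[G:H]}\sum_{gH\in G/H} g\colon k[V]^H\to k[V]^G
\]
is well defined and is a $k[V]^G$-linear retraction onto $k[V]^G$. So $k[V]^G$ is a direct summand of $k[V]^H$ as a $k[V]^G$-module. The plan is to adapt Serre's homological proof of sufficiency in Chevalley--Shephard--Todd, with $k[V]^H$ playing the role that $k[V]$ plays in the nonmodular case.

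Write $S=k[V]$, $R=S^G$, $A=S^H$, and let $R_+$ be the irrelevant ideal. The aim is to show that $R$ is polynomial, and the standard criterion is that $\Tor^R_1(k,k)$ governs minimal relations. Equivalently, $R$ is polynomial if and only if $R$ is regular, since $R$ is graded connected and a regular graded connected ring is polynomial. Since $A$ is polynomial, it is regular of finite global dimension. Because $R$ is a direct summand of $A$ as an $R$-module and $A$ is finite over $R$, I would aim to prove $\operatorname{pd}_R k<\infty$, which suffices.

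The concrete route, following Serre and Chevalley, is to show that $A$ is a free $R$-module. Then $\Tor^R_i(k,A)=0$ for $i>0$. Resolving $k$ over $A$ finitely and using flat base change gives $\Tor^R_i(k,k)\cong\Tor^A_i(A\otimes_R k,k)$, which vanishes for large $i$. Hence $R$ is regular.

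To get freeness I would use Chevalley's averaging argument. Suppose $f_1,\dots,f_m\in A$ are homogeneous, their images in $A/R_+A$ are linearly independent, and $\sum a_if_i=0$ with $a_i\in R$ and $a_1\notin (a_2,\dots,a_m)R$. Apply $\operatorname{Tr}^G_H$ after multiplying by suitable elements, and use the key lemma: for a pseudoreflection $s\in G$ with reflecting hyperplane defined by a linear form $\ell$, the divided difference $\Delta_s(f)=(f-sf)/\ell$ lowers degree and maps $S^{G}$-relations to relations of lower degree.

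The main obstacle is that the divided differences $\Delta_s$ do not preserve $A=S^H$, since $s\notin H$ in general, and in the modular case transvections also occur. I would first try to work in $S$ rather than $A$. Take the relation in $S$, apply $\Delta_s$ for each pseudoreflection $s$, and use induction on degree to show that $f_1\in R_+S$. Then apply the transfer, in the form $\operatorname{Tr}^G_H$ composed with the $H$-averaging available only through $A$, to push the conclusion back into $R_+A$. The difficulty is that $|H|$ need not be invertible, so the pullback must use the structure of $A$ as a summand of $S$ only where that is valid.

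If this fails, the alternative is Serre's local criterion. Reduce to $k$ algebraically closed and localize at the origin. Then the inertia-type argument shows that $\operatorname{Spec} A\to\operatorname{Spec} R$ is a tame quotient by a group generated by images of pseudoreflections. The result then follows from purity of the branch locus, which makes $\operatorname{Spec} R$ the quotient of a regular scheme by a tame group generated by pseudoreflections in codimension one.
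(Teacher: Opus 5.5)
Your overall frame (prove a freeness statement over $R$, then descend regularity by flat base change of $\Tor$) is sound. The paper also adapts Serre's argument. But the step that carries all the content is not closed, and the real difficulty comes one step earlier than where you place it.

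\textbf{Where the induction breaks.} In the degree induction in $S$, applying $\Delta_s$ and the inductive hypothesis gives only $\Delta_s(f_1)\in R_+S$, i.e.\ $gf_1\equiv f_1\pmod{R_+S}$ for all $g\in G$. To conclude $f_1\in R_+S$ you must average $f_1$. For $f_1\in S$ no averaging exists when $p$ divides $|G|$, and none over $H$ either, as you note. The relative transfer $\operatorname{Tr}^G_H$ applies only to elements of $A$, yet the inductive hypothesis has to be applied to the $\Delta_s(f_i)$, which lie in $S$.

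So the induction in $S$ does not close. Indeed, the implication from $gf\equiv f\pmod{R_+S}$ for all $g\in G$ to $f\in R_+S$ fails for general homogeneous $f\in S_+$. Take $G=\langle\sigma\rangle$ with $\sigma(x)=x$, $\sigma(y)=y+x$ in characteristic $p$. Then $R=k[x,y^p-x^{p-1}y]$, and $\sigma y\equiv y$ modulo $R_+S=(x,y^p)$, but $y\notin R_+S$. By contrast, the pullback $R_+S\cap A=R_+A$ is the easy part.

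\textbf{The missing ingredient.} Since $A$ is polynomial and $S$ is Cohen--Macaulay, $S$ is a graded free $A$-module. Choose a homogeneous basis $e_1=1,e_2,\dots,e_r$ and write $f_i=\sum_j b_{ij}e_j$ with $b_{ij}\in A$. A relation $\sum_i a_if_i=0$ with $a_i\in R$ then splits into relations $\sum_i a_ib_{ij}=0$ inside $A$, with $\deg b_{1j}\le\deg f_1$. So run the induction for coefficients $b_i\in A$:
\begin{enumerate}
\item Apply $\Delta_s$ to the relation, and split the resulting relation in $S$ along the basis.
\item Apply the inductive hypothesis to the $A$-coordinates of lower degree. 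This gives $\Delta_s(b_1)\in R_+S$, hence $gb_1\equiv b_1\pmod{R_+S}$ for all $g\in G$.
\item Since $b_1\in A$, now $b_1\equiv\operatorname{Tr}^G_H(b_1)\in R_+$ modulo $R_+S$.
\end{enumerate}
The base case uses $(a_2,\dots,a_m)A\cap R=(a_2,\dots,a_m)R$, which follows from the transfer.

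The paper packages exactly this homologically. It proves $S$ free over $R$ by showing $M=\Tor_1^R(k,S)=0$. It uses $M\simeq S\otimes_A L$ with $L=\Tor_1^R(k,A)$, so $M$ is generated by classes coming from $A$. Relative averaging, which factors through $\Tor_1^R(k,R)=0$, forces these classes into $S_+M$. Meanwhile pseudoreflections act trivially on $M/S_+M$, and graded Nakayama gives $M=0$.

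\textbf{The fallback.} Your alternative via purity does not rescue the argument. $\operatorname{Spec}A\to\operatorname{Spec}R$ is a quotient by a group only when $H$ is normal in $G$, and that case is already covered by the Hochster--Eagon freeness theorem. The theorem is needed for non-normal subgroups, such as point stabilizers $G_x$ containing a Sylow $p$-subgroup, where there is no group acting on $A$ with invariant ring $R$.
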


No normality assumption on $H$ is needed, and taking $H$ to be trivial recovers the Chevalley--Shephard--Todd theorem. The case of normal $H$ follows from the freeness theorem of Hochster and Eagon \cite{HE71} (see \cite[Proposition~9]{BC10}). For irreducible representations, Theorem~\ref{thm:relative} follows from Broer's direct summand criterion \cite[Theorem~2]{Broer10}, whose proof uses the Kemper--Malle classification. Chen obtained a direct summand criterion for groups containing no transvections \cite[Theorem~3.3]{Chen22}. Broer conjectured that, for arbitrary representations, the direct summand property suffices for polynomiality when the group is generated by pseudoreflections \cite{Broer05}. Our result does not settle this more general conjecture.

Theorem~\ref{thm:relative} also gives a lower bound for the dimension of the singular locus. Over an algebraically closed field of characteristic $p>0$, let $P$ be a Sylow $p$-subgroup of $G$ and $\pi:V\to V/G$ the quotient map. If $G$ is generated by pseudoreflections and $k[V]^G$ is not polynomial, then
\[
\pi(V^P)\subseteq\Sing(V/G),\qquad
\dim\Sing(V/G)\geq\dim V^P\geq1.
\]
Kemper proved that a nonempty non-Cohen--Macaulay locus of a finite linear quotient has positive dimension \cite[Theorem~3.1]{Kemper02}, whereas the above bound applies to reflection quotients even when $k[V]^G$ is Cohen--Macaulay. Together with Stepanov's theorem \cite[Theorem~1.3]{Step16}, Theorem~\ref{thm:main} reduces the classification of isolated linear quotient singularities in arbitrary dimension to the nonmodular case.

\begin{corollary}\label{thm:reduction}
Let $k$ be algebraically closed of characteristic $p>0$, $G$ a finite linear group on $V$, and $N$ the subgroup generated by the pseudoreflections in $G$. Suppose that $V/G$ is smooth away from the image of the origin. Then $k[V]^N$ is polynomial and $p\nmid[G:N]$. Let $T$ be the tangent space of $V/N$ at the image of the origin, with its induced linear $(G/N)$-action. There is a $(G/N)$-equivariant $k$-isomorphism $V/N\simeq T$ sending the image of the origin to $0$, and the action on $T\setminus\{0\}$ is free. This induces a $k$-isomorphism
\[
V/G\simeq T/(G/N).
\]
The quotient $V/G$ is smooth if and only if $G=N$.
\end{corollary}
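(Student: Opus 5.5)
We derive the corollary from Theorem~\ref{thm:main} and Theorem~\ref{thm:relative}, together with Stepanov's theorem \cite[Theorem~1.3]{Step16} for the nonmodular quotient $T/(G/N)$. Throughout, $\pi_N:V\to V/N$ and $\pi:V\to V/G$ denote the quotient maps.

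\emph{Step 1: local structure at nonzero points.} Fix $x\ne0$ in $V$. The quotient $V/G$ near $\pi(x)$ is \'etale-locally isomorphic to $V/G_x$ near the image of $x$. Since $V/G$ is smooth at $\pi(x)$, the invariant ring $k[V]^{G_x}$ is polynomial. By Serre's theorem, the pointwise stabilizer $G_U$ of every nonzero subspace $U$ then has a polynomial invariant ring. Here we use that $k$ is algebraically closed, so that $G_U=G_x$ for a generic point $x\in U$. In particular $G_x$ is generated by pseudoreflections, so $G_x\le N$. Also $N_U=G_U\cap N=G_U$ for $U\ne0$, since $G_U\subseteq G_x\subseteq N$.

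\emph{Step 2: $N$ has polynomial invariants.} By construction $N$ is generated by pseudoreflections, and by Step~1 $k[V]^{N_U}=k[V]^{G_U}$ is polynomial for every nonzero $U$. Theorem~\ref{thm:main} then gives that $k[V]^N$ is polynomial. If $V=0$ everything is trivial, so assume $V\ne0$.

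\emph{Step 3: $p\nmid[G:N]$ and freeness of the action.} This is the main obstacle. The group $\Gamma=G/N$ acts on $V/N\cong\mathbb A^n$, and the origin is a fixed point. By Step~1, every nonzero $x$ has $G_x\le N$, so $\Gamma$ acts freely on $V/N$ away from $\pi_N(0)$. Indeed, if $gN$ fixes $\pi_N(x)$, then $gx=nx$ for some $n\in N$, so $n^{-1}g\in G_x\le N$ and hence $g\in N$.

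Now suppose $p\mid|\Gamma|$. Choose an element $\gamma\in\Gamma$ of order $p$. It acts on the affine space $V/N$ with the origin as its only fixed point. For a $\mathbb Z/p$-action on affine space in characteristic $p$, the fixed locus cannot be an isolated point when $n\ge2$. One way to see this is via the depth/Cohen--Macaulay argument of \cite[Theorem~3.1]{Kemper02} applied to $\mathbb Z/p$ acting on $k[V]^N$; alternatively one can argue directly, as in Serre's local version, that a free $\mathbb Z/p$-action on a punctured regular local ring of dimension $\ge2$ forces the quotient to fail to be Cohen--Macaulay, contradicting the smoothness of $V/G$ off the origin. The case $n=1$ is handled directly: there $\Gamma$ embeds in $k^\times$ acting on the cotangent line, so its order is prime to $p$. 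Either route gives $p\nmid[G:N]$.

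\emph{Step 4: linearization.} Since $|\Gamma|$ is invertible in $k$, the maximal ideal $\mathfrak m$ of $k[V]^N$ at the origin is $\Gamma$-stable. The projection $\mathfrak m\to\mathfrak m/\mathfrak m^2=T^*$ therefore has a $\Gamma$-equivariant section, by averaging. A lift of a basis of $T^*$ consists of homogeneous polynomial generators, and the resulting map gives a $\Gamma$-equivariant isomorphism $V/N\simeq T$ sending the origin to $0$. By Step~3, $\Gamma$ acts freely on $T\setminus\{0\}$. Then
\[
V/G=(V/N)/\Gamma\simeq T/\Gamma.
\]

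\emph{Step 5: smoothness criterion.} If $G=N$, then $V/G$ is smooth by Step~2. Conversely, if $V/G$ is smooth, then $T/\Gamma$ is smooth, and the action of $\Gamma$ is nonmodular. By Chevalley--Shephard--Todd (Theorem~\ref{thm:relative} with trivial $H$, together with Serre's necessity), $\Gamma$ is generated by pseudoreflections on $T$. A pseudoreflection fixes a hyperplane, which is nonzero when $\dim T\ge2$; this contradicts freeness on $T\setminus\{0\}$, so $\Gamma=1$. When $\dim T=1$, one uses that $G$ itself then acts on a line, so every nontrivial element is a pseudoreflection and $G=N$. In either case $G=N$.
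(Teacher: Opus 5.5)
Steps 1, 2, 4 and 5 are sound, and so is the freeness half of Step 3; together they track the paper's route. The paper takes from Stepanov's reduction what you prove in Steps 1--2: $G_x\le N$ for $x\ne0$, and $k[V]^N$ polynomial via Theorem~\ref{thm:main}. It then linearizes by averaging a section, as in your Step 4. There you should say explicitly that the section is degree-preserving. This is possible because $G/N$ acts by graded automorphisms, and it is needed because otherwise the lifts need not be homogeneous and need not generate $k[V]^N$.

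The genuine gap is the proof of $p\nmid[G:N]$ in Step 3; neither route you sketch works.
\begin{enumerate}
\item Kemper's Theorem~3.1 is about the non-Cohen--Macaulay locus of a \emph{linear} quotient of $V$. It does not apply to the generally nonlinear action of $\Gamma$ on $V/N$. Even applied to the linear group $\langle N,g\rangle$, which is smooth off the origin, it only shows that an invariant ring is Cohen--Macaulay, which is no contradiction.
\item The local claim is false in dimension $2$. Every normal two-dimensional local ring is Cohen--Macaulay, and wild involutions of $k[[u,v]]$ in characteristic $2$ with an isolated fixed point exist (Artin). So nothing you say excludes $n=2$, and your separate treatment of $n=1$ does not cover it.
\item In dimension $\ge3$, a non-Cohen--Macaulay point at the image of the origin does not conflict with smoothness \emph{away} from the origin.
\end{enumerate}

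The missing idea is elementary and already available after Step 1. A Sylow $p$-subgroup $P$ of $G$ is a $p$-group acting linearly on $V\ne0$ in characteristic $p$, so it fixes some $0\ne x\in V^P$. Then $P\le G_x\le N$, so $[G:N]$ divides $[G:P]$ and is prime to $p$. This is the same trick the paper uses in the proof of Theorem~\ref{thm:main}.

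With this replacement your argument is complete. Step 5 can also be shortened: smoothness of $V/G$ makes $k[V]^G$ polynomial, and Serre's theorem then gives $G=N$ directly.
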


Together with tame uniqueness \cite[Theorem~8.1 and Corollary~8.4]{LMM25}, Corollary~\ref{thm:reduction} reduces the global, local and complete local classifications to the classical Zassenhaus--Vincent--Wolf classification of fixed-point-free representations \cite[Chs.~6 and~7]{Wolf11}, in the positive-characteristic form given by \cite[Theorem~3.13]{Step14}.

\Needspace{8\baselineskip}
\section{The Kemper--Malle criterion}\label{sec:KM}
We first prove Theorem~\ref{thm:relative} by adapting Serre's homological argument in its graded form \cite[Ch.~V, \S5, no.~2, proof of Theorem~1]{Bourbaki02}.

\begin{proof}[Proof of Theorem~\ref{thm:relative}]
Let
\[
L=\Tor_1^{k[V]^G}(k,k[V]^H),\qquad
M=\Tor_1^{k[V]^G}(k,k[V]),
\]
where $k=k[V]^G/(k[V]^G)_+$. Since $k[V]^H$ is polynomial, $k[V]$ is graded free over $k[V]^H$. Flat base change gives
\[
M\simeq k[V]\otimes_{k[V]^H}L,
\]
with the map $j_*:L\to M$ induced by $j:k[V]^H\hookrightarrow k[V]$ corresponding to $v\mapsto1\otimes v$. Thus $j_*(L)$ generates $M$ as a $k[V]$-module. The action of $G$ on $k[V]$ induces a degree-preserving $k[V]$-semilinear action on $M$, so $k[V]_+M$ is $G$-stable.

For a pseudoreflection $s$, choose $0\ne\alpha_s\in k[V]_1$ spanning $(s-1)k[V]_1$. Since $s$ acts trivially on $k[V]/(\alpha_s)$, the map $f\mapsto(s(f)-f)/\alpha_s$ takes values in $k[V]$. It is $k[V]^G$-linear because $s$ fixes $k[V]^G$. Since $s-1$ factors through multiplication by $\alpha_s$ over $k[V]^G$, functoriality of Tor gives
\[
(s-1)M\subseteq\alpha_sM\subseteq k[V]_+M.
\]
Since $G$ is generated by pseudoreflections, $(g-1)M\subseteq k[V]_+M$ for every $g\in G$.

Let $m=[G:H]$ and define the $k[V]^G$-linear relative averaging map
\[
\rho:k[V]^H\longrightarrow k[V]^G,\qquad
\rho(b)=\frac1m\sum_{gH\in G/H}g(b).
\]
For the inclusion $i:k[V]^G\hookrightarrow k[V]$, one has $i\rho=m^{-1}\sum_{gH}g\circ j$. The induced map $(i\rho)_*:L\to M$ factors through $\Tor_1^{k[V]^G}(k,k[V]^G)=0$. Thus, for every $v\in L$,
\[
0=(i\rho)_*(v)
=\frac1m\sum_{gH\in G/H}g\bigl(j_*(v)\bigr)
\equiv j_*(v)\pmod{k[V]_+M}.
\]
Since $j_*(L)$ generates $M$ as a $k[V]$-module, $M=k[V]_+M$. The module $M$ is finitely generated and graded over $k[V]$, so the graded Nakayama lemma yields $M=0$. Hence $k[V]$ is graded free over $k[V]^G$. By faithfully flat descent of regularity \cite[Chap.~0, Proposition~17.3.3(i)]{EGAIV1}, $k[V]^G$ is regular, hence polynomial.
\end{proof}

In characteristic $p>0$, Campbell, Hughes and Pollack used relative averaging to descend Cohen--Macaulayness from $k[V]^P$ to $k[V]^G$, where $P$ is a Sylow $p$-subgroup \cite[Theorem~1]{CHP91}. The hypotheses of Theorem~\ref{thm:relative} also imply the direct summand property by freeness and relative averaging. When $H$ is normal, the conclusion of Theorem~\ref{thm:relative} follows from the freeness theorem of Hochster and Eagon \cite[Proposition~9]{BC10} and was also obtained by Buchaev over algebraically closed fields of positive characteristic \cite[Proposition~2]{Buchaev26}.

We now apply Theorem~\ref{thm:relative} to pointwise stabilizers containing a Sylow $p$-subgroup.

\begin{proof}[Proof of Theorem~\ref{thm:main}]
The necessity follows from Serre's theorems. For sufficiency, let $P$ be a Sylow $p$-subgroup of $G$. Since $P$ is a $p$-group acting on the nonzero space $V$ in characteristic $p$, its fixed space $V^P$ is nonzero. Choose $0\ne x\in V^P$. Then $P\leq G_x$, so $[G:G_x]\in k^\times$. By hypothesis, $k[V]^{G_x}$ is polynomial, and Theorem~\ref{thm:relative} implies that $k[V]^G$ is polynomial.
\end{proof}

The proof of Theorem~\ref{thm:main} uses only one-dimensional subspaces. If $G$ is generated by pseudoreflections and $P$ is a Sylow $p$-subgroup, Serre's stabilizer theorem and Theorem~\ref{thm:relative} imply that $k[V]^G$ is polynomial if and only if $k[V]^{G_{V^P}}$ is polynomial. This reduces the problem to a proper subgroup when $V^G=0$, but gives no reduction for $p$-groups, since $G_{V^P}=G$. Nakajima characterized polynomiality for $p$-groups defined over the prime field using triangular bases and orbit products \cite{Nak83}. Braun proved polynomiality for three-dimensional transvection-generated $p$-groups and, in higher dimension, a criterion using Cohen--Macaulayness and two-dimensional subspace stabilizers \cite[Theorems A and B]{Braun22}.

\Needspace{12\baselineskip}
Over an algebraically closed field, the relative criterion also detects polynomiality from the regular locus.

\begin{corollary}\label{cor:singular-locus}
Let $k$ be algebraically closed of characteristic $p>0$ and $G$ a finite linear group on $V\ne0$ generated by pseudoreflections. Let $P$ be a Sylow $p$-subgroup and $\pi:V\to V/G$ the quotient map. Then
\[
k[V]^G\text{ is polynomial}\quad\Longleftrightarrow\quad
\pi(V^P)\cap (V/G)_{\mathrm{reg}}\ne\varnothing.
\]
If $k[V]^G$ is not polynomial, then
\[
\pi(V^P)\subseteq\Sing(V/G),\qquad
\dim\Sing(V/G)\geq\dim V^P\geq1.
\]
\end{corollary}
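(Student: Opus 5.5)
The plan is to reduce everything to Theorem~\ref{thm:relative} applied with $H=G_x$ for $x\in V^P$, using a local version of Serre's stabilizer theorem to pass from regularity at a point to polynomiality of $k[V]^{G_x}$.

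First the forward direction of the equivalence. If $k[V]^G$ is polynomial, then $V/G$ is an affine space, hence smooth. Moreover $V^P\ne0$ is nonempty because it contains $0$. So $\pi(V^P)\cap (V/G)_{\mathrm{reg}}=\pi(V^P)\ne\varnothing$.

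For the converse, suppose $x\in V^P$ with $\pi(x)$ a regular point. Since $P\le G_x$, the index $[G:G_x]$ is prime to $p$. The key input is the local Serre theorem. Étale-locally near $\pi(x)$, the quotient $V/G$ agrees with $V/G_x$ near the image of $x$. This is the standard fact that the map $V/G_x\to V/G$ is étale at the image of $x$, since $G_x$ is the inertia group there. Hence $V/G_x$ is regular at the image of $x$. Now translate by $x$: the map $v\mapsto v-x$ is $G_x$-equivariant because $G_x$ fixes $x$. It therefore identifies $(V/G_x,\bar x)$ with $(V/G_x,\bar0)$, so $V/G_x$ is regular at the image of the origin. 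Since $k[V]^{G_x}$ is graded, connected and finitely generated, regularity at the irrelevant ideal implies that it is polynomial: the graded minimal generators form a regular system of parameters. Theorem~\ref{thm:relative} then gives that $k[V]^G$ is polynomial. The main obstacle is justifying the étale-local comparison cleanly over $k$ algebraically closed. I would cite the standard decomposition-group argument: the completions $\widehat{\mathcal O}_{V/G,\pi(x)}\simeq \widehat{\mathcal O}_{V,x}^{G_x}$, and similarly for $G_x$, which is where algebraic closedness enters.

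For the final assertions, if $k[V]^G$ is not polynomial, the equivalence gives $\pi(V^P)\cap(V/G)_{\mathrm{reg}}=\varnothing$, that is, $\pi(V^P)\subseteq\Sing(V/G)$. The map $\pi$ is finite, so $\pi(V^P)$ is closed of dimension $\dim V^P$. Hence $\dim\Sing(V/G)\ge\dim V^P$. Finally, $V^P\ne0$ because a $p$-group acting on a nonzero space in characteristic $p$ has a nonzero fixed vector, as noted in the proof of Theorem~\ref{thm:main}. This gives $\dim V^P\ge1$.
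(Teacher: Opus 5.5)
Your proposal is correct and matches the paper's proof: the forward direction is immediate, the converse applies Theorem~\ref{thm:relative} with $H=G_x$ for $x\in V^P$ with $\pi(x)$ regular, and the dimension bound comes from finiteness of $\pi$ together with $V^P\ne0$. The only difference is that the paper simply cites Kemper's local criterion (Theorem~C of his 2002 paper) for the step ``$\pi(x)$ regular $\Rightarrow$ $k[V]^{G_x}$ polynomial''. You instead sketch its proof via $\widehat{\mathcal O}_{V/G,\pi(x)}\simeq\widehat{\mathcal O}_{V,x}^{G_x}$, translation by $x$, and the graded regularity criterion, and this sketch is sound; that completion isomorphism actually needs only that $x$ is a $k$-rational point, not algebraic closedness.
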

\begin{proof}
If $k[V]^G$ is polynomial, then $V/G$ is regular. Conversely, choose $x\in V^P$ such that $\pi(x)$ is regular. By the local criterion \cite[Theorem C]{Kemper02}, $k[V]^{G_x}$ is polynomial. Since $P\leq G_x$, Theorem~\ref{thm:relative} gives polynomiality of $k[V]^G$. If $k[V]^G$ is not polynomial, the equivalence just proved places every closed point of $\pi(V^P)$ in $\Sing(V/G)$, and both sets are closed. The restriction of the finite morphism $\pi$ to $V^P$ is finite onto its image, so $\dim\pi(V^P)=\dim V^P$.
\end{proof}

For the full fixed space $V^G$, the corresponding regularity criterion already follows from Kemper's theorem (see also \cite[Theorem 1.5]{KumminiMondal24}). Corollary~\ref{cor:singular-locus} replaces $V^G$ by $V^P$ under the additional hypothesis that $G$ is generated by pseudoreflections. In particular, a singular quotient by such a group cannot have an isolated singularity. For $\dim V=3$ and $p>2$, the bound recovers Stepanov's non-isolation theorem \cite[Theorem 1.4]{Step16}.

\Needspace{8\baselineskip}
\section{Isolated linear quotient singularities}\label{sec:reduction}
Throughout this section, $k$ is algebraically closed of characteristic $p>0$. A finite group is called \emph{tame} if its order is prime to $p$, and a representation on a nonzero vector space is \emph{fixed-point-free} if every nonidentity element fixes only the origin. For a linear quotient, an isolated singularity at the image of the origin is its only singularity, since the singular locus is closed and stable under scalar multiplication.

\begin{proof}[Proof of Corollary~\ref{thm:reduction}]
If $V/G$ is smooth, its graded coordinate ring is polynomial. Serre's necessary condition gives $G=N$, and the assertion follows. This includes $V=0$. Suppose that $V/G$ is singular. By Theorem~\ref{thm:main} and Stepanov's reduction theorem \cite[Theorem 1.3 and its proof]{Step16}, the ring $k[V]^N$ is polynomial, $G/N$ is tame, and $G_x\subseteq N$ for every $x\ne0$. The residual action of $G/N$ is faithful and free away from the vertex, since a stabilizer in $G/N$ is the image of the corresponding $G_x$.

Identify $T^*$ with $(k[V]^N)_+/(k[V]^N)_+^2$ and give it the induced grading. Since $G/N$ is tame, averaging gives a degree-preserving $(G/N)$-equivariant $k$-linear section of the natural projection $(k[V]^N)_+\to T^*$. Its image generates $k[V]^N$ as an algebra by induction on degree. The induced surjection $\Sym_k(T^*)\to k[V]^N$ is an isomorphism since $k[V]^N$ is polynomial and $\dim_k T=\dim(V/N)$. This gives the pointed $(G/N)$-equivariant isomorphism $V/N\simeq T$, and hence $V/G\simeq T/(G/N)$. Finally, if $G=N$, then $V/G$ is affine space.
\end{proof}

In dimension three, Corollary~\ref{thm:reduction} recovers the reduction statements of \cite[Corollary 1.5]{Step16} in odd characteristic and \cite[Corollary 1.6]{SS18} in characteristic two, in both cases over algebraically closed fields. Completing the quotient isomorphism also recovers the reduction statement of \cite[Theorem B]{Braun24} for $G\subseteq\mathrm{SL}(V)$.

\Needspace{8\baselineskip}
\begin{corollary}\label{cor:classification}
For $d\geq2$, the map $Q\mapsto\mathbb A_k^d/Q$ induces a bijection between
\begin{enumerate}
\item conjugacy classes of nontrivial tame subgroups $Q\leq\GL_d(k)$ acting freely on $\mathbb A_k^d\setminus\{0\}$, and
\item $k$-isomorphism classes of $d$-dimensional finite linear quotients with an isolated singularity at the image of the origin.
\end{enumerate}
The same conjugacy classes classify their local rings and completed local rings at the singular points, with all isomorphisms taken over $k$.
\end{corollary}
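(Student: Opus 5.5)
The plan is to obtain Corollary~\ref{cor:classification} by combining Corollary~\ref{thm:reduction} with the tame uniqueness results of \cite{LMM25}. The bijection needs three things: the map is well defined, it is surjective, and it is injective. I would take the three complete-local statements at the end in parallel with the global one.

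\emph{Well-definedness.} Let $Q\le\GL_d(k)$ be nontrivial, tame and free on $\mathbb A^d_k\setminus\{0\}$.
\begin{enumerate}
\item Freeness of a tame action away from $0$ makes $\mathbb A^d\setminus\{0\}\to(\mathbb A^d\setminus\{0\})/Q$ étale. Hence the quotient is smooth away from the image of the origin.
\item Freeness forces $Q$ to contain no pseudoreflections. A pseudoreflection fixes a hyperplane pointwise, and that hyperplane is nonzero because $d\ge2$.
\item Since $Q\neq1$, Serre's theorem shows that $k[V]^Q$ is not polynomial, so the image of the origin is singular.
\end{enumerate}
Conjugate subgroups give isomorphic quotients, so the map is well defined.

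\emph{Surjectivity.} Let $V/G$ be a $d$-dimensional linear quotient with an isolated singularity at the image of the origin. Corollary~\ref{thm:reduction} gives $V/G\simeq T/(G/N)$, where $\dim T=d$ and $G/N$ is tame and acts linearly and freely on $T\setminus\{0\}$. The group $G/N$ is nontrivial, since otherwise $V/G$ would be smooth ($G=N$). Taking $Q$ to be the image of $G/N$ in $\GL(T)\cong\GL_d(k)$ gives a preimage.

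\emph{Injectivity.} This is the main step, and it is where I would invoke tame uniqueness \cite[Theorem~8.1 and Corollary~8.4]{LMM25}. Suppose $\mathbb A^d/Q\simeq\mathbb A^d/Q'$ over $k$, with $Q,Q'$ tame, free off the origin, and containing no pseudoreflections.
\begin{enumerate}
\item The isomorphism must send the unique singular point to the unique singular point.
\item Passing to local or complete local rings at these points, I expect the cited theorem to state that a tame small linear group is determined up to conjugacy by the complete local ring of its quotient at the vertex. The idea behind this is that $\widehat{\mathcal O}$ recovers $Q$ as the local étale fundamental group of the punctured spectrum (purity), with its linear representation given by the action on the tangent space of the universal cover.
\item Hence $Q$ and $Q'$ are conjugate in $\GL_d(k)$.
\end{enumerate}
The same argument shows that isomorphisms of local rings, or of completed local rings, also force conjugacy. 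The converse direction of those statements is trivial, because a global isomorphism induces local and complete-local isomorphisms at the singular points. This yields all three classifications at once.

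The expected obstacle is matching the hypotheses of \cite{LMM25}: checking that "small" there means "no pseudoreflections", and that the isomorphisms there are $k$-linear and not merely abstract. If needed, I would extract the representation directly as the induced action on the cotangent space $\mathfrak m/\mathfrak m^2$ of the regular universal cover, which is linearizable because $Q$ is tame.
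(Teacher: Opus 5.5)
Your proposal is correct and follows essentially the same route as the paper. Surjectivity comes from Corollary~\ref{thm:reduction}. The vertex is singular because a nontrivial group without pseudoreflections has nonpolynomial invariants: you invoke Serre's necessary direction where the paper invokes Chevalley--Shephard--Todd. You pass from ``not polynomial'' to ``singular at the origin'' tacitly, whereas the paper cites the graded regularity criterion for that step. Injectivity, together with the local and complete local statements, comes from tame uniqueness and the fact that an isomorphism preserves the unique singular point. Your only additions are the explicit checks of well-definedness and of nontriviality of $G/N$, which the paper leaves implicit.
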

\begin{proof}
Corollary~\ref{thm:reduction} gives surjectivity. Conversely, let $Q$ be a nontrivial tame fixed-point-free group in dimension $d\geq2$. Freeness of its action gives smoothness of the quotient away from the origin. Since $Q$ contains no pseudoreflections, the Chevalley--Shephard--Todd theorem and the graded regularity criterion imply that the image of the origin is singular. The conjugacy class of $Q$ is recovered from the completed local ring by \cite[Theorem 8.1 and Corollary 8.4]{LMM25}. Since a global isomorphism preserves the unique singular point, this proves injectivity and the corresponding local and complete local statements.
\end{proof}

These tame conjugacy classes are given by the Zassenhaus--Vincent--Wolf classification \cite[Chs.~6 and~7]{Wolf11} in its positive-characteristic form \cite[Theorem~3.13]{Step14}. Over the complex numbers, the corresponding local analytic uniqueness theorem is due to Prill \cite{Prill67}. The isomorphism $V/G\simeq T/(G/N)$ in Corollary~\ref{thm:reduction} need not preserve the standard grading. The restriction $d\geq2$ is necessary, since a tame cyclic quotient of an affine line is again an affine line.

\Needspace{6\baselineskip}

\end{document}